\DeclareMathOperator{\rank}{rank} 
\DeclareMathOperator{\GL}{GL}
\DeclareMathOperator{\PTE}{PTE}
\renewcommand{\SS}{SS}
\DeclareMathOperator{\lin}{lin}
\DeclareMathOperator{\diag}{diag}
\title{On a group of normalized solutions of the higher-dimensional Prouhet--Tarry--Escott problem}
\author{Munenori Inagaki$^{*}$}
\email{2225063t@stu.kobe-u.ac.jp}
\address{${}^{**}$Graduate School of System Informatics, Kobe University, 1-1 Rokkodai, Nada, Kobe 657-8501, Japan}
\author{Hideki Matsumura$^{**}$}
\email{hmatsumura@tmu.ac.jp}
\address{${}^*$Graduate School of Science, Tokyo Metropolitan University, 1-1 Minami-Osawa, Hachioji-shi, Tokyo 192-0397, Japan}
\author{Masanori Sawa$^{*}$}
\email{sawa@people.kobe-u.ac.jp}
 \address{${}^{**}$Graduate School of System Informatics, Kobe University, 1-1 Rokkodai, Nada, Kobe 657-8501, Japan}
\author{Yukihiro Uchida$^{**}$}
\email{yuchida@tmu.ac.jp}
\address{${}^{*}$Graduate School of Science, Tokyo Metropolitan University, 1-1 Minami-Osawa, Hachioji-shi, Tokyo 192-0397, Japan}
 \thanks{This research is supported by KAKENHI 20K03517, KAKENHI 24KJ0183, KAKENHI 25K17234 and KAKENHI 50508182 of the Japan Society for the Promotion of Science (JSPS), and by the Early Support Program for Grant-in-Aid for Scientific Research of Kobe University.} 
\subjclass[2020]{primary 11D72, 11E12; secondary 05E99, 14G05}
\keywords{Prouhet--Tarry--Escott problem, normalized solution, orthogonal group, quadratic form, ellipsoidal design}
\date{\today}
\theoremstyle{plain}
 \newtheorem{theorem}{Theorem}[section] 
  \crefname{theorem}{Theorem}{Theorems}
 \newtheorem{proposition}[theorem]{Proposition}
 \crefname{proposition}{Proposition}{Propositions}
 \newtheorem{lemma}[theorem]{Lemma}
 \crefname{lemma}{Lemma}{Lemmas}
  \crefname{corollary}{Corollary}{Corollaries}
   \crefname{conjecture}{Conjecture}{Conjectures}
 \crefname{question}{Question}{Questions}
 \newtheorem{problem}[theorem]{Problem}
   \crefname{problem}{Problem}{Problems}
    \crefname{notation}{Notation}{Notations}
\crefname{table}{Table}{Tables}
\theoremstyle{definition} 
 \newtheorem{definition}[theorem]{Definition}
  \crefname{definition}{Definition}{Definitions}
 \newtheorem{example}[theorem]{Example}
   \crefname{example}{Example}{Examples}
 \newtheorem{remark}[theorem]{Remark}
   \crefname{remark}{Remark}{Remarks}
   \crefname{claim}{Claim}{Claims}
\begin{document}


\maketitle

\tableofcontents

\begin{abstract}
We elucidate, for the first time, a novel group-theoretic structure that arises from certain solutions of the $n$-dimensional Prouhet--Tarry--Escott problem of degree $2$ and size $n$.
We prove that the group is isomorphic to the orthogonal group for a certain quadratic form.
\end{abstract}

\section{Introduction} \label{sect:Intro}

In this paper, we consider the Diophantine problem called {\it the $r$-dimensional Prouhet--Tarry--Escott problem} that was introduced by Alpers--Tijdeman \cite{Alpers-Tijdeman}.

\begin{problem} [{\cite{Alpers-Tijdeman}}]  \label{PTEr}
The {\it $r$-dimensional PTE problem
 $(\PTE_r)$ of degree $n$ and size $m$}, asks whether there exists a disjoint pair of multisets
\begin{align} \label{eq:rPTE1}
A=\{(a_{11}, \ldots, a_{1r}), \ldots, (a_{n1}, \ldots, a_{nr})\}, 
B=\{(b_{11}, \ldots, b_{1r}), \ldots, (b_{n1}, \ldots, b_{nr})\} \subset \mathbb{Q}^r 
\end{align}
such that
\[
\sum_{i=1}^n a_{i1}^{k_1} \cdots a_{ir}^{k_r}=\sum_{i=1}^n b_{i1}^{k_1} \cdots b_{ir}^{k_r} 
\quad (1 \leq k_1+ \cdots +k_r \leq m).
\]
The notation $[A]=^n_m [B]$ or $[\mathbf{a}_1,\ldots,\mathbf{a}_n] =_m [\mathbf{b}_1,\ldots,\mathbf{b}_n]$ is used to denote the solution $(A,B)$.
\end{problem}

Alpers--Tijdeman \cite{Alpers-Tijdeman} established a geometric framework for $\PTE_2$ including the explicit construction of ideal solutions.
This framework was generalized to $\PTE_{\geq 3}$ by Ghiglione~\cite{Ghiglione} via geometric techniques related to discrete tomography and techniques involving Gr\"{o}bner bases.
Matsumura and Sawa \cite{MS2025+} constructed a family of rational ideal solutions from a certain combinatorial object called \emph{ellipsoidal design}; for the definition of ellipsoidal designs, see Definition~\ref{ED}.

\begin{definition} [{Normalized solution}] \label{def:normalizedsol}
Suppose that $[A]=_m^n [B]$ is a (not necessarily disjoint) solution of $\PTE_r$ and $\rank A=\rank B=r$, where 
$A$ and $B$ are regarded as the $r \times n$ matrices  whose columns are 
$(a_{i1},\ldots,a_{ir})^T$ and $(b_{i1},\ldots,b_{ir})^T$ respectively.
Then, $(A,B)$ is called a {\it normalized solution} if the $r \times 2n$ matrix $[A \; B] 
$ concatenating $A$ and $B$ is in the reduced row echelon form. 
\end{definition}

\begin{example} \label{ex:NS1}
Let $G$ be a cyclic group generated by $(x_1,x_2, \ldots, x_7) \mapsto (x_7,x_1,\ldots,x_6)$ and $\mathbf{x} ^G$ be the $G$-orbit of a vector $\mathbf{x} \in \mathbb{Q}^7$. Define
\begin{align*} 
X = (1,1,0,1,0,0,0)^G, \; Y=(0,0,1,0,1,1,0)^G.
\end{align*}
Then $[X]=^7_2 [Y]$ is a solution of $\PTE_7$, where $X$ is the set of characteristic vectors of the Fano plane and $Y$ is the `reverse' of the sequences (vectors) of $X$.
The reduced echelon form of $[X \; Y]$ is given by
\begin{align*} 
A=(1,0,0,0,0,0,0)^G, \; B= \left(-\frac{1}{2},\frac{1}{2},\frac{1}{2},0,\frac{1}{2},0,0 \right)^G.
\end{align*}
Therefore, $[A]=^7_2 [B]$ is a normalized solution.
\end{example}

The full rank condition of $A$ and $B$ in \cref{N2n} is natural because most of the solutions including those of $\PTE_2$
in \cite{Alpers-Tijdeman,MS2025+} satisfy this condition.
In particular, Alpers--Tijdeman \cite[Theorem 8]{Alpers-Tijdeman} established a construction 
method of solutions of $\PTE_2$ from points which have equal $X$-rays along with different directions. 
For $(p,q) \in \mathbb{Z}^2$ and $A, B \subset \mathbb{Z}^2$, $A$ and $B$ \emph{have equal $X$-rays along with $\lin \{ (p,q)\}$}, where $\lin \{ (p,q)\} = \{ (\lambda p, \lambda q) \mid \lambda \in \mathbb{R} \}$, if they have an equal number of points on each line parallel to $\lin \{ (p,q)\}$.
This construction was generalized to $\PTE_r$ by Ghiglione~\cite[Theorems 4.3.17 and 4.3.20]{Ghiglione}.

In this paper, we elucidate, for the first time, a novel group-theoretic structure that arises from normalized solutions of $\PTE_n$ of degree $2$ and size $n$ (\cref{N2(n)}).

\begin{definition} [{Group of normalized solutions}]  \label{N2n}
Define the group $N_2(n)$ of {\it normalized solutions} of $\PTE_n$ of degree $2$ and size $n$ by
\[
N_2(n) = \{ A^{-1}B \in M_n(\mathbb{Q}) \mid A, B \in M_n(\mathbb{Q}), \; [A]=_2^n [B], \; \rank A=\rank B=n\}.
\]
\end{definition}

Since $A$ is a nonsingular matrix, $N_2(n)$ is well-defined.
From now on, we identify a normalized solution $[A]=^n_2 [B]$ of $\PTE_n$ with the matrix $A^{-1}B \in N_2(n)$.
By \cref{N2(n)}, $N_2(n)$ forms a group under matrix multiplication.

In \cite{TWZ2025}, the right hand side of \cref{N2(n)} was independently investigated
in the context of spectral graph theory.
\begin{theorem}[{\cite[Theorem 4 and Corollary 1]{TWZ2025}}] \label{TWZ}
Let $n \geq 2$.
Let $\SS_n(\mathbb{Q})$ denote the set of all skew-symmetric matrices of order $n$ and $\Sigma_n(\mathbb{Q})$ denote the set of permutation matrices of order $n$. 
Then for all $n \in \mathbb{N}$, it holds that
\[
N_2(n) =\{(I_n+S)^{-1} (I_n-S)R \mid  R\in \Sigma_n(\mathbb{Q}),  \; S \in \SS_n(\mathbb{Q}), \; S \mathbf{1}= \mathbf{0}\}.
\]
Here, $\mathbf{1}$ is the all-one vector.
\end{theorem}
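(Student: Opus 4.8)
The plan is to recast $N_2(n)$ as a stabilizer inside the rational orthogonal group and then to produce the factorization by averaging over all permutation matrices. First I would translate the defining moment conditions: for a degree-$2$ solution $[A]=_2^n[B]$ with $A,B\in M_n(\Q)$ nonsingular, the degree-$1$ equations read $A\mathbf{1}=B\mathbf{1}$ and the degree-$2$ equations read $AA^T=BB^T$. Putting $M=A^{-1}B$ and using that $A$ is invertible, these become $M\mathbf{1}=\mathbf{1}$ and $MM^T=I_n$, and conversely every such $M$ arises (take $A=I_n$, $B=M$). Thus
\[
N_2(n)=\{M\in M_n(\Q)\mid MM^T=I_n,\ M\mathbf{1}=\mathbf{1}\},
\]
the stabilizer of $\mathbf{1}$ in the rational orthogonal group. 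Note also that $\mathbf{1}^TM\mathbf{1}=\mathbf{1}^T(M\mathbf{1})=n$, which I use below.

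The inclusion $\supseteq$ is the easy direction. For $S\in\SS_n(\Q)$ with $S\mathbf{1}=\mathbf{0}$ the Cayley transform $Q=(I_n+S)^{-1}(I_n-S)$ is defined over $\Q$ (a rational skew matrix has purely imaginary eigenvalues, so $I_n+S$ is invertible), is orthogonal, and satisfies $Q\mathbf{1}=(I_n+S)^{-1}\mathbf{1}=\mathbf{1}$ because $(I_n+S)\mathbf{1}=\mathbf{1}$. As each $R\in\Sigma_n(\Q)$ is orthogonal with $R\mathbf{1}=\mathbf{1}$, the product $QR$ lies in $N_2(n)$.

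For $\subseteq$ I would reduce to a single combinatorial claim. Given $M\in N_2(n)$, suppose we can find $R\in\Sigma_n(\Q)$ with $\det(M+R)\neq0$. Then $Q:=MR^{-1}$ is rational orthogonal with $Q\mathbf{1}=\mathbf{1}$, and $\det(I_n+Q)=\det(M+R)/\det R\neq0$; hence $S:=(I_n+Q)^{-1}(I_n-Q)$ is well-defined, rational, and skew-symmetric (here one uses that $Q$ is orthogonal, so $S$ commutes with $Q$ and $S^T=-S$), and satisfies $S\mathbf{1}=\mathbf{0}$ since $Q\mathbf{1}=\mathbf{1}$. As the Cayley transform is involutive, $Q=(I_n+S)^{-1}(I_n-S)$, and therefore $M=(I_n+S)^{-1}(I_n-S)R$. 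So the whole theorem reduces to what I expect to be the main obstacle: for every $M\in N_2(n)$ there exists a permutation matrix $R$ with $M+R$ nonsingular.

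I would establish this by summing over the entire group of permutation matrices instead of hunting for one $R$. Since $\det(M+R)=\det(R)\det(I_n+R^{-1}M)$ and $R\mapsto R^{-1}$ permutes $\Sigma_n(\Q)$, it is enough to find $P\in\Sigma_n(\Q)$ with $\det(I_n+PM)\neq0$, and I claim
\[
\sum_{P\in\Sigma_n(\Q)}\det(I_n+PM)=2\,n!\neq0,
\]
which forces some summand to be nonzero. To evaluate the sum, expand each term into principal minors, $\det(I_n+PM)=\sum_{T}\det\bigl((PM)_{T,T}\bigr)$, and exchange the order of summation. For a fixed $T$ of size $k$, summing the principal minor over all $P$ reduces (after accounting for the $(n-k)!$ extensions of a partial bijection) to summing $\det\bigl([M_{\iota(i),j}]_{i,j\in T}\bigr)$ over injections $\iota\colon T\hookrightarrow\{1,\dots,n\}$; grouping these by image and reordering rows produces a factor $\sum_{\rho\in S_k}\sgn(\rho)$, which vanishes once $k\geq2$. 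Only $k=0$ and $k=1$ survive, contributing $n!$ and $(n-1)!\,\mathbf{1}^TM\mathbf{1}=(n-1)!\,n=n!$ respectively, giving the total $2\,n!$. Together with the two inclusions, this yields the stated equality.
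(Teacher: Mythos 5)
Your proposal is correct, but note that the paper does not prove this statement at all: it is quoted verbatim from \cite{TWZ2025} (Theorem 4 and Corollary 1), so there is no internal proof to compare against. Judged on its own, your argument is complete. Your first step --- the identification $N_2(n)=\{M\mid MM^T=I_n,\ M\mathbf{1}=\mathbf{1}\}$ --- coincides with the paper's Proposition~\ref{N2(n)}. The genuinely substantive part is your reduction of the inclusion $\subseteq$ to the existence, for each $M\in N_2(n)$, of a permutation matrix $R$ with $\det(M+R)\neq 0$, and your averaging identity
\[
\sum_{P\in\Sigma_n(\mathbb{Q})}\det(I_n+PM)=2\,n!
\]
checks out: expanding $\det(I_n+PM)$ into principal minors and summing over $P$, every minor of size $k\ge 2$ is killed by the factor $\sum_{\rho\in S_k}\sgn(\rho)=0$, while the $k=0$ and $k=1$ terms contribute $n!$ and $(n-1)!\,\mathbf{1}^TM\mathbf{1}=n!$ respectively, the latter using $M\mathbf{1}=\mathbf{1}$. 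The surrounding Cayley-transform bookkeeping (invertibility of $I_n+S$ for skew $S$, skew-symmetry and rationality of $S=(I_n+Q)^{-1}(I_n-Q)$ for orthogonal $Q$ with $\det(I_n+Q)\neq0$, preservation of the conditions $\mathbf{1}\mapsto\mathbf{1}$ and $S\mathbf{1}=\mathbf{0}$, and involutivity) is all standard and correctly executed. One cosmetic remark: the two contributions $n!+n!$ are really ``$|\Sigma_n|\cdot 1$'' plus ``$(n-1)!$ times the total entry sum of $M$,'' so the nonvanishing uses only $\mathbf{1}^TM\mathbf{1}>-n!/(n-1)!=-n$, which your hypothesis $\mathbf{1}^TM\mathbf{1}=n$ comfortably supplies; this makes the argument robust and arguably more elementary than hunting for a specific $R$.
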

However, the group structure of $N_2(n)$ was not considered in \cite{TWZ2025}.
In the main theorem (Theorem~\ref{MT}), we determine its group structure.

Let $K$ be a field, and denote the group of matrices of order $n$ by $M_n(K)$.
Let $Q$ be a positive definite symmetric matrix corresponding to a quadratic form.
We denote the orthogonal group of $Q$ over $K$ of degree $n$ by $O_n(Q,K)$, i.e.,
\[
O_n(Q,K) =\{A \in \GL_{n}(K) \mid A^TQA=Q \}.
\]

\begin{theorem} \label{MT}
Let $n \geq 2$, and  
$Q$ be a diagonal matrix defined by
\begin{align} \label{Q}
Q =\diag \left[1, 3 , \ldots , \frac{n(n-1)}{2}  \right]. 
\end{align}
Then it holds that
\[
N_2(n) \simeq O_{n-1}(Q,\mathbb{Q}).
\]
\end{theorem}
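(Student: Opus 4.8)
The plan is to identify $N_2(n)$ with the stabilizer of the all-one vector $\mathbf{1}$ inside the standard orthogonal group $O_n(I_n,\mathbb{Q})$, and then to realize that stabilizer as the orthogonal group of the standard inner product restricted to the hyperplane $\mathbf{1}^\perp = \{v \in \mathbb{Q}^n \mid \mathbf{1}^T v = 0\}$.

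First I would translate the degree-$2$ conditions into matrix identities. Writing $A, B \in M_n(\mathbb{Q})$ so that the $i$-th column is the $i$-th point of the corresponding multiset, the degree-$1$ equations $\sum_i a_{ij} = \sum_i b_{ij}$ read $A\mathbf{1} = B\mathbf{1}$, and the degree-$2$ equations $\sum_i a_{ij}a_{il} = \sum_i b_{ij}b_{il}$ read $AA^T = BB^T$. Putting $M = A^{-1}B$ and substituting $B = AM$, the identity $AA^T = A M M^T A^T$ forces $MM^T = I_n$, while $A\mathbf{1} = A M \mathbf{1}$ forces $M\mathbf{1} = \mathbf{1}$; conversely every $M \in \GL_n(\mathbb{Q})$ with $MM^T = I_n$ and $M\mathbf{1} = \mathbf{1}$ occurs, via $(A,B) = (I_n, M)$. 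Hence $N_2(n) = \{M \in O_n(I_n,\mathbb{Q}) \mid M\mathbf{1} = \mathbf{1}\}$, in agreement with the Cayley-type description in \cref{TWZ}.

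Such an $M$ is orthogonal for the standard form and fixes $\mathbf{1}$, so it preserves $\mathbf{1}^\perp$, and $\mathbb{Q}^n = \mathbb{Q}\mathbf{1} \oplus \mathbf{1}^\perp$ is an orthogonal decomposition. I would then check that the restriction map $M \mapsto M|_{\mathbf{1}^\perp}$ is a group isomorphism from $N_2(n)$ onto the orthogonal group of the restricted form: it is injective since $M$ is determined by its actions on $\mathbf{1}$ and on $\mathbf{1}^\perp$, and surjective since any orthogonal self-map of $\mathbf{1}^\perp$ extends, by fixing $\mathbf{1}$, to a rational orthogonal matrix lying in $N_2(n)$. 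To make the target explicit I would use the rational basis $v_k = e_1 + \cdots + e_k - k\,e_{k+1}$ $(1 \leq k \leq n-1)$ of $\mathbf{1}^\perp$; a direct computation gives $\langle v_k, v_l\rangle = 0$ for $k \neq l$ and $\langle v_k, v_k\rangle = k(k+1)$, so the Gram matrix in this basis is $\diag[2,6,\ldots,(n-1)n] = 2Q$. Expressing $O(\mathbf{1}^\perp)$ in this basis identifies it with $O_{n-1}(2Q,\mathbb{Q})$, and since $A^T(2Q)A = 2Q \iff A^T Q A = Q$ we have $O_{n-1}(2Q,\mathbb{Q}) = O_{n-1}(Q,\mathbb{Q})$, which chains to the claimed isomorphism.

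The reduction to the stabilizer of $\mathbf{1}$ and the restriction to $\mathbf{1}^\perp$ are essentially formal; the one step needing care is the final identification, where one must exhibit an orthogonal basis of $\mathbf{1}^\perp$ whose Gram matrix is exactly $2Q$ and then invoke scaling-invariance of the orthogonal group. It is worth emphasizing that the theorem asserts only a group isomorphism, so what matters is the congruence class of the restricted form up to a scalar, not a particular matrix representative; the specific shape of $Q$ in \eqref{Q} is recovered precisely because the orthogonal vectors $v_k$ have squared lengths $k(k+1) = 2\binom{k+1}{2}$.
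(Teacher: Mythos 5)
Your proof is correct and follows essentially the same route as the paper: both first identify $N_2(n)$ with the stabilizer of $\mathbf{1}$ in $O_n(I_n,\mathbb{Q})$, then pass to the invariant hyperplane $\mathbf{1}^\perp$ using exactly the orthogonal basis $\mathbf{b}_k = e_1+\cdots+e_k-k\,e_{k+1}$ with Gram matrix $2Q$, and conclude by scale-invariance of the orthogonal group. The only difference is presentational — you phrase the second step as restriction to $\mathbf{1}^\perp$, while the paper conjugates by the matrix $P=[\mathbf{1}\;\mathbf{b}_1\;\cdots\;\mathbf{b}_{n-1}]$ and verifies the block-diagonal form by explicit matrix computation.
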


This paper is organized as follows:
In \Cref{sect:proof}, we first show that $N_2(n)$ forms a group under matrix multiplication  (\cref{N2(n)}) and then prove the main theorem (\Cref{MT}).
Finally in \Cref{sect:conclusion}, we give a higher-dimensional generalization of the notion of ellipsoidal design, and conclude this paper with an open question concerning such designs.

\section{Proof of the main theorem}  \label{sect:proof}
In this section, we prove \cref{MT}.
First, we show that $N_2(n)$ forms a group.

\begin{proposition} \label{N2(n)}
Let $n \geq 2$.
Then it holds that
\[
N_2(n)= \{ A \in O_n(\mathbb{Q}) \mid  A \mathbf{1}=\mathbf{1}\}.
\]
In particular, $N_2(n)$ forms a group under matrix multiplication.
\end{proposition}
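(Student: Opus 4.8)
The plan is to translate the Prouhet--Tarry--Escott conditions $[A]=_2^n[B]$ directly into matrix identities and then read off the claimed description of $N_2(n)$; this translation is the crux of the argument. Writing $A=[\mathbf{a}_1\;\cdots\;\mathbf{a}_n]$ with columns $\mathbf{a}_i=(a_{i1},\ldots,a_{in})^T$ (and likewise for $B$), so that the $(j,i)$-entry of $A$ is $a_{ij}$, I would first handle the degree-$1$ equations (those with $k_1+\cdots+k_n=1$): each asserts $\sum_i a_{ij}=\sum_i b_{ij}$ for a single coordinate $j$, and since $\sum_i a_{ij}=(A\mathbf{1})_j$ these are exactly $A\mathbf{1}=B\mathbf{1}$. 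The degree-$2$ equations split into the pure ones $\sum_i a_{ij}^2=\sum_i b_{ij}^2$ and the mixed ones $\sum_i a_{ij}a_{ij'}=\sum_i b_{ij}b_{ij'}$ for $j\neq j'$; because $\sum_i a_{ij}a_{ij'}=(AA^T)_{jj'}$, together they are precisely the symmetric-matrix identity $AA^T=BB^T$. Hence $[A]=_2^n[B]$ is equivalent to the pair of conditions $A\mathbf{1}=B\mathbf{1}$ and $AA^T=BB^T$.

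With this equivalence in hand, the two inclusions are short. For $\subseteq$, let $M=A^{-1}B\in N_2(n)$, so $A,B$ are invertible and satisfy the two identities. Substituting $B=AM$ into $AA^T=BB^T=AMM^TA^T$ and cancelling the invertible factors $A$ and $A^T$ gives $MM^T=I_n$, while substituting $B=AM$ into $A\mathbf{1}=B\mathbf{1}=AM\mathbf{1}$ and cancelling $A$ gives $M\mathbf{1}=\mathbf{1}$. For the reverse inclusion, given $M\in M_n(\mathbb{Q})$ with $MM^T=I_n$ and $M\mathbf{1}=\mathbf{1}$, I take $A=I_n$ and $B=M$: both are invertible (indeed $\det M=\pm 1$), and the pair satisfies $A\mathbf{1}=\mathbf{1}=M\mathbf{1}=B\mathbf{1}$ together with $AA^T=I_n=MM^T=BB^T$, so $[I_n]=_2^n[M]$ is an admissible solution and $M=I_n^{-1}M\in N_2(n)$.

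Finally, to obtain the group structure it suffices to check that $G:=\{A\in M_n(\mathbb{Q})\mid AA^T=I_n,\ A\mathbf{1}=\mathbf{1}\}$ is closed under the group operations. It contains $I_n$; if $A,B\in G$ then $(AB)(AB)^T=A(BB^T)A^T=AA^T=I_n$ and $AB\mathbf{1}=A\mathbf{1}=\mathbf{1}$, so $AB\in G$; and if $A\in G$ then $A^{-1}=A^T$ satisfies $A^{-1}(A^{-1})^T=A^TA=I_n$, while left-multiplying $A\mathbf{1}=\mathbf{1}$ by $A^{-1}$ yields $A^{-1}\mathbf{1}=\mathbf{1}$, so $A^{-1}\in G$.

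I expect the only delicate point to be the index bookkeeping in the first paragraph: one must keep in mind that the PTE moments are summed over the points, which are the \emph{columns} of $A$, so that the mixed second moments assemble into $AA^T$ rather than $A^TA$. A secondary point worth flagging is that the trivial witness $A=I_n$ in the reverse inclusion is legitimate precisely because the definition of $N_2(n)$ imposes only the power-sum identities and full rank, and in particular does \emph{not} require the solution $(A,B)$ to be disjoint.
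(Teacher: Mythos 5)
Your proof is correct and follows essentially the same route as the paper: translate $[A]=_2^n[B]$ into the identities $A\mathbf{1}=B\mathbf{1}$ and $AA^T=BB^T$, cancel the invertible factor $A$ for one inclusion, and use the witness $A=I_n$, $B=M$ for the other. You merely spell out the steps the paper leaves implicit (the moment-to-matrix translation, the explicit group axioms, and the remark that disjointness is not required), all of which check out.
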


\begin{proof}
Let $A$ be in the right hand side.
Since $A \mathbf{1}=\mathbf{1}$ and $AA^T=I_n$ imply the  conditions of $PTE_n$ of degree $1$ and $2$ respectively, 
it follows that $A=I_n^{-1}A \in N_2(n)$.

Suppose $C \in N_2(n)$. Then there exist $A, B \in M_2(\mathbb{Q})$ such that $C=A^{-1}B$ and $[A]=_2^n [B]$.
Since $[A]=_2^n [B]$, we have $AA^T=BB^T$ and $A \mathbf{1}=B\mathbf{1}$.
Therefore, we have $(A^{-1}B)(A^{-1}B)^T=I_n$ and $A^{-1}B\mathbf{1}=\mathbf{1}$.
By the representation of the right hand side, $N_2(n)$ forms a group under matrix multiplication.
\end{proof}

Now we prove our main theorem.

\begin{proof}[{Proof of \cref{MT}}]
By \cref{N2(n)}, we have
\[
N_2(n)= \{ A \in O_n(\mathbb{Q}) \mid  A \mathbf{1}=\mathbf{1}\}.
\]
Let $A \in 
N_2(n)$ . Then for all $\mathbf{x} \in \langle \mathbf{1} \rangle^{\perp}$, we have
\begin{align*}
(A \mathbf{x} , \mathbf{1})=(A \mathbf{x} , A\mathbf{1})=(\mathbf{x} , \mathbf{1})=0
\end{align*}
since $A \mathbf{1}=\mathbf{1}$ and $AA^T=I_n$.
Therefore, $A$ defines an orthogonal transform $\phi:\langle \mathbf{1} \rangle^{\perp} \to  \langle \mathbf{1} \rangle^{\perp}$
that preserves the rationality.
Here, $\langle \mathbf{1} \rangle$ is the $\mathbb{Q}$-vector space 
spanned by $\mathbf{1}$ and $\langle \mathbf{1} \rangle^{\perp} $ is its
orthogonal complement. 

Let $\mathbf{e}_k =[1, \ldots,1, -k, 0, \ldots, 0]^T  \in M_{n,1}(\mathbb{Q})$, whose $(k+1)$-st component is $-k$.
Then $\{\mathbf{e}_k \}_{k=1}^{n-1}$ forms a basis of $\langle \mathbf{1} \rangle^{\perp} $.
Let $B$ be the representation matrix of $\phi$ with respect to the basis $\{\mathbf{e}_k \}_{k=1}^{n-1}$
and 
\[
2Q:=[(\mathbf{e}_i, \mathbf{e}_j) _{ij}] = 2 \diag \left[1, 3, \ldots, \frac{n(n-1)}{2} \right].
\] 
If we denote $\mathbf{x}=\xi_1 \mathbf{e}_1+\dots + \xi_{n-1}\mathbf{e}_{n-1} \in \langle \mathbf{1} \rangle^{\perp} $ and $\boldsymbol{\xi}:=(\xi_1, \ldots, \xi_{n-1})$, then we have
\[
\boldsymbol{\xi}^T 2Q \boldsymbol{\xi}=(\mathbf{x} ,\mathbf{x})=(A\mathbf{x}, A\mathbf{x})=(B \boldsymbol{\xi})^T 2Q (B \boldsymbol{\xi}).
\] 
Thus, we obtain $B^TQB=Q$. Therefore, we have an isomorphism 
\[
N_2(n) \simeq O_{n-1}(Q,\mathbb{Q}), \; A \mapsto B.
\] 
\end{proof}

\begin{remark} 
\cref{MT} can be proven without explicit computation, except for a specific orthonormal basis $\{\mathbf{e}_k \}_{k=1}^{n-1}$.
Indeed, $\phi$ is equivalent to an isometry on $\langle \mathbf{1} \rangle^{\perp}$, i.e., it preserves the quadratic form $2Q$,
and the group of isometries are isomorphic to $O_{n-1}(2Q)=O_{n-1}(Q)$. 
 See \cite[Definition 2.8 and p.\ 26]{Gerstein2008} for details. 
\end{remark}

 In the rest of this section, we examine whether $O_{n-1}(Q,\mathbb{Q})$ in \cref{MT} is conjugate to the usual orthogonal group $O(n-1,\mathbb{Q})$ (\cref{rem:sq}).
To do this, we define the similarity of quadratic forms.

\begin{definition} [{Similarity of quadratic forms}] \label{similar}
Two positive definite symmetric matrices $Q_1$ and $Q_2 \in M_n(\mathbb{Q})$ are {\it equivalent} if there exists 
$g \in \GL_n(\mathbb{Q})$ such that $g^TQ_1g=Q_2$.
Moreover, $Q_1$ and $Q_2$ are {\it similar} if there exists $a \in \mathbb{Q}^{\times}$ such that $Q_1$ and $aQ_2$ are equivalent.
\end{definition}

\begin{remark}
Suppose that $Q_1$ and $Q_2$ are equivalent, say $g^TQ_1g=Q_2$ for some $g \in \GL_n(\mathbb{Q})$. 
Then $O(Q_1,\mathbb{Q})$ and $O(Q_2,\mathbb{Q})$ are isomorphic via $A \mapsto g^{-1}Ag$.
If $Q_1$ and $Q_2$ are similar, then $O(Q_1,\mathbb{Q})$ and $O(Q_2,\mathbb{Q})$ are isomorphic
since $O(aQ_2,\mathbb{Q})=O(Q_2,\mathbb{Q})$ for $a \in \mathbb{Q}^{\times}$.
\end{remark}

\begin{remark} \label{rem:sq}
\begin{enumerate}[labelindent=0pt,itemindent=*]
\item[(i)]
As seen from the proof of \cite[Theorem 1]{Schoenberg1937}, two quadratic forms
$\sum_{i=1}^{n-1} i (i+1) x_i^2$ and 
$\sum_{i=1}^{n-1} y_i^2$
are similar if and only if one of the following conditions holds:
\begin{enumerate}
\item[(a)] $n$ is odd and square.
\item[(b)] $n \equiv 0 \pmod{4}$.
\item[(c)] $n \equiv 2 \pmod{4}$ and $n$ is a sum of two squares.
\end{enumerate}
In these cases, $O_{n-1}(Q,\mathbb{Q})$ is conjugate to $O(n-1,\mathbb{Q})$.
The sequence of $n-1$ satisfying one of the above conditions is given in \cite{OEIS-A096315}.
\end{enumerate}\vspace*{-2\partopsep}
\begin{enumerate}
\item[(ii)]  If none of the above conditions hold, then $Q'$ is not similar to $I_{n-1}$ by \cite[Theorem 1]{Schoenberg1937}.
In this case, $O_{n-1}(Q,\mathbb{Q})$ and $O(n-1,\mathbb{Q})$ are not conjugate by \cite[Lemma 1]{Ono1955}.
 \end{enumerate} 
\end{remark}

\section{Further remarks and future works}  \label{sect:conclusion}

The following notion is a generalization of {\it spherical design} over $\mathbb{S}^{n-1}$.

 \begin{definition}[{$(n-1)$-dimensional ellipsoidal design}] \label{ED}
Let $Q \in M_n(\mathbb{Q})$ be a positive definite symmetric matrix. For $r \in \mathbb{R}_{>0}$, let $E \subset \mathbb{R}^n$ be an ellipse defined by $\mathbf{x}^TQ\mathbf{x}=r$, where $\mathbf{x} :=(x_1,\ldots,x_n)^T$, and let $\tau$ be an $O_n(Q,\mathbb{R})$-invariant measure on $E$.
A finite nonempty subset $X$ of $E$ is an {\it ellipsoidal $t$-design} if
 \[
 \frac{1}{|X|} \sum_{\mathbf{x} \in X} f(\mathbf{x})=\frac{1}{|E|}\int_E f(\mathbf{x}) d\tau(\mathbf{x})
 \text{\; for all polynomials $f$ of degree $\leq t$.}  
  \]
In particular when $E=\mathbb{S}^{n-1}$ and $Q=I_n$, the point set $X$ is called a {\it spherical $t$-design}. 
 \end{definition}

 An ellipsoidal $t$-design on $E$ is called {\it tight} if 
  \[
  |X| = \begin{cases}
   \binom{n+s-1}{s}+\binom{n+s-2}{s-1} & \text{($t = 2s$)}\\
    2\binom{n+s-1}{s} & \text{($t=2s+1$)}.
  \end{cases}
 \]
The tightness was first defined by Delsarte et al.~\cite{DGS1977} for
spherical designs.
Since an ellipsoid and a sphere are homeomorphic, we can similarly define the tightness for ellipsoidal designs in general.
However, by considering rational points, we can find an essential gap between spheres and ellipses.
Namely, there exists a rational tight design in $\mathbb{R}^2$ only if $(x,y) Q (x,y)^T = x^2 + y^2$ or $x^2 + xy + y^2$ (see \cite[Theorem 4.4]{MS2025+}). 
It is still an open question whether there exists a rational (not necessarily tight) $4$-design on $\mathbb{S}^1$, 
 whereas there exist infinitely many rational tight 
$5$-designs on the ellipse $x^2 + xy + y^2 = 1$. 
Matsumura and Sawa~\cite[Theorem 5.5]{MS2025+}
established a construction method of rational ideal solutions of $\PTE_2$ by utilizing rational tight designs over the ellipse $x^2+xy+y^2=1$.
Since $x^2+xy+y^2=(x+y/2)^2+3(y/2)^2$, the two quadratic forms $x^2+xy+y^2$ and $x^2+3y^2$ are equivalent.
The latter quadratic form appears in the case $n=3$ of \cref{MT}.

We close this paper with the following interesting open question:

\begin{problem} \label{TRED}
Can we construct higher-dimensional tight rational ellipsoidal designs for $Q$ given in \cref{MT}?
For example, can we construct $4$-dimensional tight rational ellipsoidal designs for
$ \diag[1,3,6,10]$?  
\end{problem}
Note that if $n=4$, then $Q$ is similar to $I_3$ by \cref{rem:sq}.
Thus, the first case to be handled is $n=5$.

\noindent {\bf Acknowledgements.}
The authors thank Hiroki Shimakura for helpful comments on the earlier draft.

\begin{bibdiv}
\begin{biblist}
\bibselect{quadrature}
\end{biblist}
\end{bibdiv} 

\end{document}